\title{Nordhaus-Gaddum type inequalities for multiple domination and packing parameters in graphs}
\author {
Doost Ali Mojdeh and Babak Samadi\\
Department of Mathematics\\
University of Mazandaran, Babolsar, Iran\\
{\tt damojdeh@umz.ac.ir}\\
{\tt samadibabak62@gmail.com}\vspace{3mm}\\
Lutz Volkmann\\
Lehrstuhl II f\"{u}r Mathematik\\
RWTH Aachen University, 52056 Aachen, Germany\\
{\tt volkm@math2.rwth-aachen.de}\vspace{3mm}\\
}
\date{}
 \newtheorem{theorem}{Theorem}[section]
\theoremstyle{definition}
\begin{document}

\maketitle
\begin{abstract}
\noindent We study the Nordhaus-Gaddum type results for $(k-1,k,j)$ and $k$-domination numbers of a graph $G$ and 
investigate these bounds for the $k$-limited packing and $k$-total limited packing numbers in graphs. As the special 
case $(k-1,k,j)=(1,2,0)$ we give an upper bound on $dd(G)+dd(\overline{G})$ stronger than that presented by 
Harary and Haynes (1996). Moreover, we establish upper bounds on the sum and product of packing and open packing 
numbers and characterize all graphs attaining these bounds.
\end{abstract}
{\bf Keywords:} $k$-domination number, Nordhaus-Gaddum inequality, open packing number, packing number, total 
domination number.\vspace{1mm}\\
{\bf AMS Subject Classifications:} 05C69.
\section{Introduction}

Throughout this paper, let $G$ be a finite graph with vertex set $V=V(G)$, edge set $E=E(G)$, minimum degree 
$\delta=\delta(G)$ and maximum degree $\Delta=\Delta(G)$. We use \cite{w} as a reference for terminology and notation 
which are not defined here. For any vertex $v\in V$, $N(v)=\{u\in G\mid uv\in E(G)\}$ denotes the {\em open neighborhood} 
of $v$ in $G$, and $N[v]=N(v)\cup \{v\}$ denotes its {\em closed neighborhood}. We denote the {\em disjoint union} of two 
graphs $G$ and $H$ by $G+H$. The complement $\overline{G}$ of a graph $G$ has vertex set $V(G)$ and 
$uv‎\in E(\overline{G})‎$ if and only if $uv‎\notin E(G)$.\\
Nordhaus and Gaddum \cite{ng}, gave lower and upper bounds on the sum and product of the chromatic number of a graph and its complement, in terms of the order of the graph. Since then, bounds on $‎\psi(G)+‎\psi(\overline{G})‎‎$ or $\psi(G)\psi(\overline{G})‎‎$ are called Nordhaus-Gaddum inequalities, where $‎\psi‎$ is a graph parameter. For more information about this subject the reader can consult \cite{ah}.\\
A set  $S\subseteq V$ is a {\em dominating set} ({\em total dominating set}) in $G$ if each vertex in $V \setminus S$ (in $V$) is adjacent to at least one vertex in $S$. The {\em domination number} $\gamma(G)$ ({\em total domination number} $\gamma_{t}(G)$) is the minimum cardinality of a dominating set (total dominating set) in $G$. A subset $B \subseteq V$ is a {\em packing} ({\em open packing}) in $G$ if for every distinct vertices $u,v‎\in B‎$, $N[u]‎‎\cap N[v]‎=‎\emptyset‎$ ($N(u)‎‎\cap N(v)=‎\emptyset‎‎‎$). The {\em packing number} ({\em open packing number}) $\rho(G)$ ($‎\rho‎‎‎‎_{o}‎(G)‎‎$) is the maximum cardinality of a packing (an open packing) in $G$.  Clearly, $B \subseteq V$ is a packing (an open packing) in $G$ if and only if $|N[v]‎\cap B‎|‎\leq1‎$ ($|N(v)‎\cap B‎|‎\leq1‎$), for all $v‎\in V‎$. Here, we prefer to work with these definitions rather than the previous ones.\\
Let $k,k'$ and $k''$ be nonnegative integers. A set  $S\subseteq V$ is a {\em $(k,k',k'')$-dominating set} in $G$ if every vertex in $S$ has at least $k$ neighbors in $S$ and every vertex in $V‎\setminus S‎$ has at least $k'$ neighbors in $S$ and at least $k''$ neighbors in $V‎\setminus S‎$. The {\em $(k,k',k'')$-domination number} $\gamma‎_{(k,k',k'')}‎(G)$ is the minimum cardinality of a $(k,k',k'')$-dominating set. We note that every graph with the minimum degree at least $k$ has a $(k,k',k'')$-dominating set, since $S=V(G)$ is such a set. This concept was introduced in \cite{skg}, as a generalization of many domination parameters.\vspace{1mm}\\
Note that
\begin{itemize}
\item $\gamma‎_{(0,1,1)}‎(G)=\gamma‎_{r}(G)$: {\em Restrained domination number} (\cite{tp});
\item $\gamma‎_{(1,2,1)}‎(G)=‎\gamma‎_{2r}‎‎(G)$: {\em Restrained double domination number} (\cite{kn});
\item $\gamma‎_{(k-1,k,0)}‎(G)=\gamma_{‎\times ‎k}‎(G): k$-{\em Tuple domination number} (\cite{gghr,hh1,sdk});
\item $\gamma‎_{(0,k,0)}‎(G)=‎\gamma‎_{k}(G)‎‎: k$-{\em Domination number}. (\cite{cfhv,fj1,fj2,p,v1})
\end{itemize}
A subset $S‎\subseteq V(G)‎$ is $k$-{\em independent} if the maximum degree of the subgraph induced by the vertices of $S$ is less or equal to $k-1$. A subset $B \subseteq V$ is a {\em $k$-limited packing} ({\em $k$-total limited packing}) in $G$ if $|N[u]‎\cap B‎|‎\leq k‎‎$ ($|N(u)‎\cap B‎|‎\leq k‎‎$), for every vertex $u\in V$. The {\em $k$-limited packing number} ({\em $k$-total limited packing number}) $L‎_{k}‎(G)$ ($L‎_{k,t}‎(G)$) is the maximum cardinality of a $k$-limited packing ($k$-total limited packing) in $G$. These concepts were introduced and investigated in \cite{gghr} and \cite{hms}, respectively.\\
In this paper, we continue the study of these bounds for the above domination parameters. We give an upper bound on the sum of $(k-1,k,j)$-domination number of a graph and its complement which gives improvements of a conjecture by Harary and Haynes \cite{hh2}. Moreover, we continue presenting Nordhaus-Gaddum bounds for $k$-limited packing and $k$-total limited packing numbers with emphasis on the case $k=1$. Furthermore, we characterize all graphs attaining these bounds. This subject was initiated by exhibiting the upper bound $L‎_{2}(G)+L‎_{2}(\overline{G})‎‎\leq n+2‎$ in \cite{s}.


\section{$(k-1,k,j)$ and $k$-domination}

Harary and Haynes \cite{hh2} established the following Nordhaus-Gaddum inequality for double domination number when $‎\gamma(G),‎\gamma(\overline{G})‎\geq5$:
\begin{equation}\label{EQ1}
dd(G)+dd(\overline{G})‎\leq ‎\delta(G)+‎\delta(\overline{G})‎‎‎
\end{equation}
Also, they conjectured that for any graph $G$ with $‎\gamma(G),‎\gamma(\overline{G})‎\geq4$
\begin{equation}\label{EQ2}
dd(G)+dd(\overline{G})‎\leq ‎\delta(G)+‎\delta(\overline{G})‎‎‎
\end{equation}
In \cite{sdk}, the conjecture was generalized and proved as
\begin{equation}\label{EQ4}
\gamma‎_{‎\times k}(G)‎‎‎+\gamma‎_{‎\times k}(\overline{G})‎‎‎‎\leq \delta(G)+‎\delta(\overline{G})
\end{equation}
when $‎\gamma(G),‎\gamma(\overline{G})‎\geq k+2$.\\
The following theorem improves the upper bounds (\ref{EQ2}) and (\ref{EQ4}). Moreover, as the special case $k=2$ it leads to the following upper bound which is stronger than (\ref{EQ1}):
\begin{center}
$dd‎‎(G)+dd(\overline{G})‎\leq \delta(G)+‎\delta(\overline{G})-(\gamma(G)+\gamma(\overline{G}))‎+8‎\leq‎‎\delta(G)+‎\delta(\overline{G})-2‎‎‎$,
\end{center}
when $‎\gamma(G),‎\gamma(\overline{G})‎\geq5$.
\begin{theorem}
Let $j\ge 0$ and $k\ge 1$ be two integers. If $G$ is a graph with
$‎\gamma(G),‎\gamma(\overline{G})‎\geq k+j+2$, then
$$\gamma‎_{‎(k-1,k,j)}(G)‎‎‎+\gamma‎_{‎(k-1,k,j)}(\overline{G})‎‎‎‎\leq \delta(G)+‎\delta(\overline{G})-(\gamma(G)+\gamma(\overline{G}))‎+2k+4‎‎‎.$$
\end{theorem}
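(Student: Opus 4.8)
The plan is to deduce the theorem from a single per-graph bound applied to both $G$ and $\overline{G}$. Concretely, I would aim to prove that whenever $\gamma(G),\gamma(\overline{G})\ge k+j+2$ one has
$$\gamma_{(k-1,k,j)}(G)\le \delta(\overline{G})-\gamma(\overline{G})+k+2,$$
and then add this to the same inequality with the roles of $G$ and $\overline{G}$ exchanged (using $\overline{\overline{G}}=G$, $\delta(\overline{\overline{G}})=\delta(G)$ and $\gamma(\overline{\overline{G}})=\gamma(G)$); the two right-hand sides sum exactly to $\delta(G)+\delta(\overline{G})-(\gamma(G)+\gamma(\overline{G}))+2k+4$. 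Since $\delta(\overline{G})=n-1-\Delta(G)$, proving the displayed bound amounts to exhibiting a $(k-1,k,j)$-dominating set $S$ of $G$ with $|S|\le n-\Delta(G)-\gamma(\overline{G})+k+1$, that is, a removable set $T=V\setminus S$ of size at least $\Delta(G)+\gamma(\overline{G})-k-1$ satisfying the three local requirements: every $t\in T$ has at least $k$ neighbors in $S$ and at least $j$ neighbors in $T$, and every $s\in S$ has at least $k-1$ neighbors in $S$. I would stress at the outset that both hypotheses are genuinely needed for this one inequality: the regime in which $\gamma(G)$ and $\gamma(\overline{G})$ are simultaneously large is nonempty (for instance suitably large Paley graphs, which are self-complementary with unbounded domination number), but it is precisely in that regime that any one-sided version fails, so the construction must be fed by the domination structure of both graphs.

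For the construction I would take a vertex $u$ of maximum degree in $G$ (equivalently a vertex of minimum degree $\delta(\overline{G})=n-1-\Delta(G)$ in $\overline{G}$, with $N_{\overline{G}}(u)=V\setminus N_G[u]$) together with a minimum dominating set $\overline{D}$ of $\overline{G}$, so $|\overline{D}|=\gamma(\overline{G})$. The set to be removed is built from $N_G(u)$, which contributes $\Delta(G)$ vertices, together with the vertices of $\overline{D}$ that lie among the $\overline{G}$-neighbors of $u$, which supply the additional $\gamma(\overline{G})$ savings; from this I retain in $S$ a small correction set $C$ of at most $k$ vertices, including at least $k-1$ of the $G$-neighbors of $u$, whose purpose is to repair the degree requirement at $u$. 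Accounting for the single vertex $u$ and for $C$, the bookkeeping gives $|S|\le \delta(\overline{G})-\gamma(\overline{G})+k+2$, which is the target size; the slight slack in the ``$+k+2$'' is there to absorb any overlap between $N_G(u)$ and $\overline{D}$.

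The heart of the proof, and the step I expect to be hardest, is verifying that the resulting $S$ is genuinely $(k-1,k,j)$-dominating, because the three local conditions pull against the smallness of $S$. The requirement that each $t\in T$ have at least $k$ neighbors in $S$ I would establish by contraposition: a removed vertex with fewer than $k$ neighbors in $S$ would, together with $u$ and those few neighbors, yield a dominating set of $\overline{G}$ of size smaller than $\gamma(\overline{G})$ (here $u$ already $\overline{G}$-dominates all of $S\subseteq N_{\overline{G}}[u]$, so only the deficient neighbors remain to be covered), contradicting the hypothesis on $\gamma(\overline{G})$. The induced-degree condition, that each $s\in S$ have at least $k-1$ neighbors in $S$, is the delicate one: for $u$ it is handled by $C$, but for the remaining vertices it must be extracted from the same domination hypotheses, and it is exactly this condition that resists any further shrinking of $S$ and thereby pins the constant. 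Finally the ``restrained'' condition, that each $t\in T$ have at least $j$ neighbors in $T$, is where the strengthening of the hypothesis from $k+2$ to $k+j+2$ is consumed: the lower bound $|T|\ge \Delta(G)+\gamma(\overline{G})-k-1$ must be shown large and cohesive enough that every removed vertex keeps $j$ of its neighbors inside $T$. I would expect the principal obstacle to be carrying the $k$-domination condition and the induced-degree condition simultaneously while keeping $|S|$ as small as the $-\gamma(\overline{G})$ term demands; balancing these is what forces the joint use of $\gamma(G)\ge k+j+2$ and $\gamma(\overline{G})\ge k+j+2$.
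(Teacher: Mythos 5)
Your high-level shape (one per-graph inequality, applied to $G$ and to $\overline{G}$, then added) is the paper's, but the per-graph inequality you chose is not the one the paper proves, and your sketch does not establish it. You aim for $\gamma_{(k-1,k,j)}(G)\le \delta(\overline{G})-\gamma(\overline{G})+k+2$; the paper proves $\gamma_{(k-1,k,j)}(G)\le \delta(G)-\gamma(\overline{G})+k+2$, i.e.\ with the minimum degree of $G$ itself, built from a vertex $u$ of \emph{minimum} degree with the dominating set placed \emph{inside} $N_G(u)$. (Both versions sum to the stated theorem; for $k=1,j=0$ yours is just the paper's bound for $\overline{G}$ rearranged, but for $k\ge 2$ it is a genuinely different and in places stronger claim, which does not follow from the paper and may not even be true.) Your construction for it breaks in three concrete places. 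First, the bookkeeping: a minimum dominating set $\overline{D}$ of $\overline{G}$ need only meet $N_{\overline{G}}[u]$ in a single vertex (to dominate $u$); all its other vertices may lie in $N_G(u)$, so the ``additional $\gamma(\overline{G})$ savings'' can collapse to $1$, and a slack of $k+2$ cannot absorb an overlap that can be as large as $\gamma(\overline{G})-1$, which is unbounded. Second, the contraposition for the $k$-domination condition is invalid: $W=\{u,t\}\cup(N_G(t)\cap S)$ fails to dominate $\overline{G}$ exactly when some vertex outside $W$ is $G$-adjacent to all of $W$, and such a common neighbor is perfectly possible --- it merely has to lie in the removed set $T$, which nothing excludes. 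Third, the induced-degree condition (each vertex of $S$ having $k-1$ neighbors in $S$), which you yourself flag as open, is unattainable in your setup: your $S$ lives in $V\setminus N_G[u]$, and even under the hypotheses this set can be independent in $G$, so for $k\ge 2$ no correction set of size $O(k)$ can repair it.

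The idea you are missing is the paper's key lemma, in which $\gamma(\overline{G})$ enters through a counting argument and never through an actual dominating set of $\overline{G}$. Since $\gamma(\overline{G})\ge k+j+2\ge 3$, the pair $\{u,v_0\}$ does not dominate $\overline{G}$; take a maximum independent set $X_0$ of $\overline{G}$ inside the set $W_0$ of undominated vertices. Then $X_0\cup\{u,v_0\}$ dominates $\overline{G}$, forcing $|X_0|\ge\gamma(\overline{G})-2\ge k+j$, and, crucially, $X_0$ is a \emph{clique of common $G$-neighbors} of $u$ and $v_0$ lying inside $N_G(u)$. With $u$ of minimum degree, $D=N_G(u)\setminus\bigl(X_0'\setminus\{x_1,\dots,x_k\}\bigr)$ (where $|X_0'|=\gamma(\overline{G})-2$) then works by pure counting: every vertex $v$ has at least $\gamma(\overline{G})-2$ common neighbors with $u$ inside $N_G(u)$ while $D$ omits only $\gamma(\overline{G})-2-k$ vertices of $N_G(u)$, so $v$ retains $k$ neighbors in $D$; the clique structure gives each $x_i$ its $k-1$ neighbors inside $D$; and $|D|=\delta(G)-\gamma(\overline{G})+k+2\le\delta(G)-j$ yields the $j$-condition. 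This clique-plus-counting mechanism is precisely what replaces your three failed steps, and it is why the bound comes out with $\delta(G)$ rather than $\delta(\overline{G})$.
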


\begin{proof}
Let $u$ be a vertex of the minimum degree $‎\delta(G)‎$ and $v_{0}$ be another vertex of $G$.
Since $‎\gamma(\overline{G})‎\geq k+j+2\ge 3$, the set $\{u,v_{0}\}$ does not dominate $V(G)$ in
$\overline G$. Thus $W_{0}=V(G)-N_{\overline G}[\{u,v_{0}\}]\neq\emptyset$. Let $X_{0}\subseteq W_{0}$ be an independent set
of maximum size in $\overline G$. Then it is easy to see that $X_{0}‎\cup‎ \{u,v_{0}\}$ is a dominating set in
$\overline G$. Therefore $k+j‎\leq‎‎\gamma(\overline{G})‎-2‎\leq|X_{0}|‎$ and $u$ and $v_{0}$
have at least $‎\gamma(\overline{G})‎-2$ mutually adjacent common neighbors in $N_G(u)$.\\
Let $X_{0}'$ be a subset of $X_{0}$ with $|X_{0}'|=‎\gamma(\overline{G})‎-2‎‎$ and
\begin{center}
$D=N_G(u)‎\setminus(X_{0}'‎\setminus\{x‎_{1}‎,x_2,...,x‎_{k}‎\}‎)‎$,
\end{center}
in which $x‎_{1},x_2,...,x‎_{k}$ are arbitrary vertices of $X_{0}'$. Obviously, $u$ is adjacent to 
$x‎_{1},x_2,...,x‎_{k}$ in $D$ and $|N_{G}(x_{i})\cap D|\geq k-1$, for all $1\leq i\leq k$. Now let 
$v\neq u,x‎_{1}‎,x_{2},...,x‎_{k}$ be an arbitrary vertex of $G$. Similar to the above argument, 
there exists a set $X\subseteq N_{G}(v)$, with $|X|\geq ‎\gamma(\overline{G})-2$ and $X\subseteq N_{G}(u)$. Thus
\begin{equation*}
\begin{array}{lcl}
|N_{G}(v)\cap D|&=&|N_{G}(v)\cap N_{G}(u)|-|N_{G}(v)\cap(X_{0}'‎\setminus\{x‎_{1}‎,x_2,...,x‎_{k}‎\})|\\
&\geq& \gamma(\overline{G})‎-2-|X_{0}'‎\setminus\{x‎_{1}‎,x_2,...,x‎_{k}‎\}|\\
&=& k.
\end{array}
\end{equation*}
So, all vertices in $V(G)\setminus\{x‎_{1}‎,x_2,...,x‎_{k}‎\}$ must have at least $k$ neighbors in $D$.\\
On the other hand, every vertex in $V(G)‎\setminus D‎$ has at most $|D|‎\leq ‎\delta-j$ neighbors
in $D$ and so at least $j$ neighbors in $V‎\setminus D‎$.\\
The above argument shows that $D$ is a $(k-1,k,j)$-dominating set in $G$.
Thus,
\begin{equation}\label{EQ6}
‎\gamma‎_{‎(‎k-1,k,j)}(G)‎‎‎\leq|D|‎‎=|N(u)‎\setminus(X_{0}'‎\setminus\{x‎_{1}‎,...,x‎_{k}\}‎)‎|‎‎=‎\delta(G)-  \gamma(\overline{G})‎‎‎+k+2.
\end{equation}
By the symmetry between $G$ and $\bar G$, we have
\begin{equation}\label{EQ7}
\gamma‎_{‎‎(k-1,k,j)}(\overline{G})‎‎‎‎\leq \delta(\overline{G})-‎‎‎‎\gamma(G)+k+2‎.
\end{equation}
Now we deduce from the inequalities (\ref{EQ6}) and (\ref{EQ7}) that
$$\gamma‎_{‎‎(k-1,k,j)}(G)‎‎‎+\gamma‎_{‎(‎k-1,k,j)}(\overline{G})‎‎‎‎\leq ‎\delta(G)+‎\delta(\overline{G})‎‎‎-(\gamma(G)+\gamma(\overline{G}))‎+2k+4,$$
as desired.
\end{proof}

Nordhaus-Gaddum bounds for $(2‎\leq)k‎$-domination number are not known as many as those for the usual domination number (the case $k=1$). In fact, Volkmann \cite{v1} showed that
\begin{center}
$\gamma‎_{2}(G)‎‎‎+\gamma‎_{2}(\overline{G})‎‎‎‎\leq‎ n+2$, for a graph $G$ of order $n$.
\end{center}
Moreover, Prince \cite{p} proved the following upper bound
\begin{center}
$\gamma‎_{k}(G)‎‎‎+\gamma‎_{k}(\overline{G})‎‎‎‎\leq‎ n+2k-1$, for a graph $G$ of order $n$.
\end{center}
We improve these two upper bounds for the case when $‎\gamma(G),\gamma(\overline{G})‎‎\geq k+2‎$ as follows.
\begin{theorem}
Let $k\ge 1$ be an integer. If $G$ is a graph with $‎\gamma(G),\gamma(\overline{G})‎‎\geq k+2‎$, then
$$\gamma‎_{k}(G)‎‎‎+\gamma‎_{‎k}(\overline{G})‎‎‎‎\leq
‎\kappa‎(G)+‎\kappa‎(\overline{G})-(\gamma(G)+\gamma(\overline{G}))‎+2k+4‎‎‎.$$
\end{theorem}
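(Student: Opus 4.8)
The plan is to prove the one-sided bound $\gamma_{k}(G)\le \kappa(G)-\gamma(\overline{G})+k+2$ and then, since the hypothesis $\gamma(G),\gamma(\overline{G})\ge k+2$ is symmetric in $G$ and $\overline{G}$, add this to the corresponding bound for $\overline{G}$. The guiding idea is to reproduce the mechanism of the preceding theorem, but to replace the open neighbourhood $N_{G}(u)$ of a minimum-degree vertex, which is what forced the term $\delta(G)$ there, by a \emph{minimum vertex cut} of $G$, which will instead produce the smaller term $\kappa(G)$. I would first record that the hypotheses force both $G$ and $\overline{G}$ to be connected and non-complete: a complete graph has domination number $1$, and if $G$ were disconnected then one vertex in each of two components would dominate $\overline{G}$ with just two vertices, contradicting $\gamma(\overline{G})\ge k+2\ge 3$. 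Hence a minimum vertex cut of $G$ exists and genuinely disconnects $G$.

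Next I would fix a minimum vertex cut $C$ with $|C|=\kappa(G)$, let $A$ be the vertex set of one component of $G-C$ and $B=V\setminus(A\cup C)$ the union of the remaining components, and choose $a\in A$, $b\in B$. The key structural observation is that $N_{G}(v)\subseteq A\cup C$ for every $v\in A$ and $N_{G}(v)\subseteq B\cup C$ for every $v\in B$, so that common neighbours \emph{across} the cut are trapped inside $C$: in particular $N_{G}(v)\cap N_{G}(b)\subseteq C$ for each $v\in A$. Running the common-neighbour argument of the previous theorem on the pair $\{v,b\}$, which cannot dominate $\overline{G}$ because $\gamma(\overline{G})\ge 3$, then shows that $v$ has at least $\gamma(\overline{G})-2$ neighbours lying in $C$; symmetrically every $v\in B$ has at least $\gamma(\overline{G})-2$ neighbours in $C$. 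Applied to the single pair $\{a,b\}$, the same argument yields a set $X\subseteq C$ that is independent in $\overline{G}$, hence a clique in $G$, consisting of common neighbours of $a$ and $b$ with $|X|\ge\gamma(\overline{G})-2$.

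I would then pick $X'\subseteq X$ with $|X'|=\gamma(\overline{G})-2$, fix arbitrary $x_{1},\dots,x_{k}\in X'$ (possible since $\gamma(\overline{G})-2\ge k$), and set $D=C\setminus(X'\setminus\{x_{1},\dots,x_{k}\})$, so that $|D|=\kappa(G)-(\gamma(\overline{G})-2-k)=\kappa(G)-\gamma(\overline{G})+k+2$. To verify that $D$ is $k$-dominating I would classify the vertices of $V\setminus D=A\cup B\cup(X'\setminus\{x_{1},\dots,x_{k}\})$: any $v\in A$ (and symmetrically any $v\in B$) has at least $\gamma(\overline{G})-2$ neighbours in $C$, of which we deleted at most the $\gamma(\overline{G})-2-k$ vertices of $X'\setminus\{x_{1},\dots,x_{k}\}$, leaving at least $k$ neighbours in $D$; while a deleted vertex $v\in X'\setminus\{x_{1},\dots,x_{k}\}$ lies in the $G$-clique $X$ and is therefore adjacent to each of $x_{1},\dots,x_{k}\in D$, again giving $k$ neighbours in $D$. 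This gives $\gamma_{k}(G)\le|D|=\kappa(G)-\gamma(\overline{G})+k+2$, and summing with the $\overline{G}$ version completes the proof.

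The step I expect to be the main obstacle is the localisation claim: that a set $D\subseteq C$ can $k$-dominate even the vertices sitting deep inside $A$ and $B$, far from the cut. This is precisely where connectivity does the work that minimum degree did before. The absence of edges between $A$ and $B$ confines the common neighbours of a deep vertex and the fixed opposite-side vertex to $C$, and the hypothesis $\gamma(\overline{G})\ge k+2$ guarantees that there are enough such neighbours to survive the deletion of the clique vertices. Making this count interact correctly with the clique $X$ that rescues the deleted vertices is the delicate point; the remaining bookkeeping mirrors the proof of the preceding theorem.
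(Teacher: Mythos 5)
Your proposal is correct and matches the paper's own proof essentially step for step: both take a minimum vertex cut, use the common-neighbour argument of the preceding theorem to show that vertices on opposite sides of the cut have at least $\gamma(\overline{G})-2$ mutually adjacent common neighbours inside the cut, delete all but $k$ vertices of that clique to obtain a $k$-dominating set of size $\kappa(G)-\gamma(\overline{G})+k+2$, and then symmetrize. Your write-up is in fact slightly more careful than the paper's (explicitly justifying that the cut exists and that \emph{every} vertex outside the cut, not just the two chosen ones, has enough neighbours in it), but the approach is the same.
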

\begin{proof}
Let $A‎\subseteq V(G)‎$ be a vertex cut of $G$ with $|A|=\kappa(G)$, and let $u$ and $v$ be two vertices
from different components of $G-A$. Similar to the proof of Theorem 2.1, $u$ and $v$ have at least
$‎\gamma(\overline{G})‎-2‎\geq k‎$ mutually adjacent common neighbors and these neighbors must
be in $A$. Let $X‎\subseteq A‎$ be a set of size $‎\gamma(\overline{G})‎-2‎‎‎$
such that $G[X]$ is complete. We define
\begin{center}
$S=A‎\setminus(X‎\setminus\{x‎_{1}‎,x_2,...,x‎_{k}‎\}‎)‎$,
\end{center}
where $x‎_{1}‎,x_2,...,x‎_{k}$ are arbitrary vertices in $X$. Clearly, every vertex in
$X‎\setminus\{x‎_{1}‎,x_2,...,x‎_{k}‎\}$ is adjacent to
$x‎_{1}‎,x_2,...,x‎_{k}\in S$.  Moreover, all vertices in $V‎\setminus A$ must have at least
$‎\gamma(\overline{G})‎-2‎‎$ neighbors in $A$, and at least $k$ of them must be in $S$. Therefore
$S$ is a $k$-dominating set in $G$. It follows that
\begin{equation}\label{EQ8}
\gamma‎_{k}(G)‎\leq|S|‎=‎\kappa(G)‎-‎\gamma(\overline{G})‎+k+2.
\end{equation}
By the symmetry, we have
\begin{equation}\label{EQ9}
\gamma‎_{k}(\overline{G})\leq‎\kappa(\overline{G})-‎\gamma(G)‎‎+k+2.
\end{equation}
Adding the inequalities (\ref{EQ8}) and (\ref{EQ9}), we obtain the desired upper bound.
\end{proof}


\section{Packing and open packing}

Since $‎\rho‎_{o}(G)+\rho‎_{o}(\overline{G})‎‎‎‎=n+1$ and $‎\rho‎_{o}(G)\rho‎_{o}(\overline{G})‎‎‎‎=n$ (except when $n=2$, for which $‎\rho‎_{o}(G)+\rho‎_{o}(\overline{G})=‎\rho‎_{o}(G)\rho‎_{o}(\overline{G})=n+2$), for every graph $G$ of order $n$ with $‎\Delta(G)‎$ or $‎\Delta(\overline{G})‎=0$, we may always assume that $‎\Delta(G)‎,‎\Delta(\overline{G})‎\geq1‎‎$. First, we define $‎\Gamma‎$ to be the family of all graphs $G$ satisfying:\vspace{1mm}\\
$(i)$\ There exists a vertex $v$ in the open neighborhood $N(u)$, of a vertex $u$ of the maximum degree $‎\Delta(G)‎$, such that $N[v]\subseteq N[u]‎$;\vspace{1mm}\\
$(ii)$\ The subset $V‎(G)\setminus N[u]‎$ is an independent set;\vspace{1mm}\\
$(iii)$\ Every vertex in $N[u]\setminus N[v]‎$ has at most one neighbor in $V(G)‎\setminus N[u]‎$.\vspace{1mm}\\
We are now in a position to present the following theorem.
\begin{theorem}
Let $G$ be a graph of order $n$ with $‎\Delta(G),‎\Delta(\overline{G})‎‎\geq1‎$. Then
\begin{equation*}\label{EQ10}
‎\rho‎(G‎)+‎\rho(\overline{G})‎‎‎\leq‎ \left\{
\begin{array}{ccc}
n-‎\Delta(G)+1‎& \text{if } & ‎\gamma(\overline{G})‎=1‎‎‎‎\\
n-‎\Delta(G)+2‎ & \text{if } & \gamma(\overline{G})‎=2\\
‎\delta(G)+2‎‎ & \text{if } & \gamma(\overline{G})‎‎\geq‎3\\
\end{array} \right.
\end{equation*}
and
\begin{equation*}\label{EQ11}
‎\rho‎(G‎)\rho(\overline{G})‎‎‎\leq‎ \left\{
\begin{array}{ccc}
n-‎\Delta(G)‎& \text{if } & ‎\gamma(\overline{G})‎=1‎‎‎‎\\
2n-2‎\Delta(G)‎ & \text{if } & \gamma(\overline{G})‎=2\\
\delta(G)+1‎ & \text{if } & \gamma(\overline{G})‎‎\geq‎3\\
\end{array} \right.
\end{equation*}
The equality in the case $‎\gamma(\overline{G})‎=1$ and $\gamma(\overline{G})‎‎\geq3$ holds if and only if $G\in ‎\Gamma‎$ and $\overline{G}\in ‎\Gamma‎$, respectively. Furthermore, the equality in the cases $\gamma(\overline{G})‎=2$ holds if and only if $G\in ‎\Gamma‎$ and $diam(\overline{G})‎\geq‎3‎$.
\end{theorem}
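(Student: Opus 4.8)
The plan is to reduce the entire statement to one elementary inequality — that every graph $H$ of order $n$ satisfies $\rho(H)\le n-\Delta(H)$ — and then to read off the three cases from the behaviour of $\rho(\overline G)$ (or of $\rho(G)$) as $\gamma(\overline G)$ grows. For the key inequality I would fix a vertex $w$ of maximum degree in $H$ and a maximum packing $B$. Since the closed neighbourhoods of the members of $B$ are pairwise disjoint, at most one vertex of $B$ can lie in $N[w]$; hence $|B|\le 1+|V\setminus N[w]|=1+(n-\Delta(H)-1)=n-\Delta(H)$. I will also use the standard fact $\rho(H)\le\gamma(H)$ (for a packing $B$ and a minimum dominating set $D$, each $N[b]$ meets $D$, and in distinct vertices for distinct $b\in B$).

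With these two tools the three regimes are immediate. If $\gamma(\overline G)=1$ then $\overline G$ has a universal vertex $x$ with $N_{\overline G}[x]=V$, so $|N_{\overline G}[x]\cap B|\le1$ forces $\rho(\overline G)=1$; combining this with $\rho(G)\le n-\Delta(G)$ gives the first line, and the product bound is the same inequality. If $\gamma(\overline G)=2$ I would use $\rho(\overline G)\le\gamma(\overline G)=2$ together with $\rho(G)\le n-\Delta(G)$ to obtain the sum bound $n-\Delta(G)+2$ and the product bound $2(n-\Delta(G))=2n-2\Delta(G)$. For $\gamma(\overline G)\ge3$ the decisive observation is that $\gamma(\overline G)\ge3$ is equivalent to the assertion that every two vertices of $G$ have a common neighbour in $G$: a pair $\{x,y\}$ fails to dominate $\overline G$ exactly when some $z\notin\{x,y\}$ is $G$-adjacent to both, i.e. $N_G(x)\cap N_G(y)\ne\emptyset$. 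This forces $diam(G)\le2$, hence $\rho(G)=1$, and the general bound applied to $\overline G$ then gives $\rho(\overline G)\le n-\Delta(\overline G)=\delta(G)+1$, which yields both displayed inequalities.

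The hard part is the equality analysis, and this is where the family $\Gamma$ enters. Here I would argue that $\rho(G)=n-\Delta(G)$ forces a rigid structure: for a maximum-degree vertex $u$, the bound above shows a maximum packing $B$ must contain all of $V\setminus N[u]$ together with exactly one vertex of $N[u]$. Consequently $V\setminus N[u]$ is itself a packing, so it is independent (this is $(ii)$) and no vertex may have two neighbours in it; and the single vertex of $B$ lying in $N[u]$ must be at distance at least $3$ from every vertex of $V\setminus N[u]$, which produces a vertex $v\in N(u)$ with $N[v]\subseteq N[u]$ as in $(i)$ together with the neighbourhood restriction $(iii)$. Conversely, from conditions $(i)$–$(iii)$ I would build an explicit packing of size $n-\Delta(G)$ out of $V\setminus N[u]$ and a suitable vertex of $N[u]$. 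For $\gamma(\overline G)=1$ and for $\gamma(\overline G)\ge3$ equality in the displayed bounds is precisely this extremal configuration for $G$ and for $\overline G$ respectively, giving $G\in\Gamma$ and $\overline G\in\Gamma$. For $\gamma(\overline G)=2$ there is one extra ingredient: equality additionally requires $\rho(\overline G)=2$, i.e. two vertices of $\overline G$ at distance at least $3$, which is exactly $diam(\overline G)\ge3$; combining this with $G\in\Gamma$ gives the stated characterisation.

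I expect the genuinely delicate step to be the exact matching between the packing condition and conditions $(i)$–$(iii)$. One must control not only adjacencies but also common neighbours between the distinguished vertex of $N[u]$ and the set $V\setminus N[u]$, and then check that an extremal packing of size $n-\Delta(G)$ can actually be realised — a spurious common neighbour routed through $N[v]$ can in principle collapse the obvious candidate $\bigl(V\setminus N[u]\bigr)\cup\{v\}$, so the packing must be assembled with care. Verifying that all of this bookkeeping closes up exactly into the three defining conditions of $\Gamma$ is the main obstacle; once it is done, the product-bound equalities follow from the same case distinctions, since $\rho(G)\rho(\overline G)$ is extremised under exactly the constraints that extremise the sum.
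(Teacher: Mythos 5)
Your handling of the six inequalities is correct and is essentially the paper's own argument: the key bound $\rho(H)\le |V(H)|-\Delta(H)$ obtained from a maximum-degree vertex, the standard fact $\rho\le\gamma$ for the cases $\gamma(\overline{G})\in\{1,2\}$, the observation that $\gamma(\overline{G})\ge 3$ forces every two vertices of $G$ to have a common neighbour (hence $\rho(G)=1$), and the complement arithmetic $n-\Delta(\overline{G})=\delta(G)+1$ all play exactly the same roles in the paper's proof.

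The genuine gap is in the equality analysis, and it is precisely the one you flagged yourself: you never establish the converse, that a graph in $\Gamma$ actually attains $\rho(G)=n-\Delta(G)$, and this converse is in fact false for $\Gamma$ as defined, so the ``bookkeeping'' you deferred cannot be closed. Conditions $(i)$--$(iii)$ constrain only the vertices of $N[u]\setminus N[v]$; nothing forbids a vertex of $N(v)$ from having two neighbours outside $N[u]$, and such a spurious common neighbour (your phrase) destroys not only the candidate packing $\{v\}\cup(V\setminus N[u])$ but every packing of size $n-\Delta(G)$. Concretely, let $V(G)=\{u,v,z,x_1,x_2,w_1,w_2\}$ with edges $uv,\,uz,\,vz,\,zx_1,\,zx_2,\,uw_1,\,uw_2$. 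Then $\Delta(G)=4=\deg(u)$, $N[v]=\{u,v,z\}\subseteq N[u]$, the set $V\setminus N[u]=\{x_1,x_2\}$ is independent, and $w_1,w_2$ have no neighbours outside $N[u]$, so $G\in\Gamma$; yet the only pairs of vertices at distance at least $3$ are the pairs $\{x_i,w_j\}$, so $\rho(G)=2<3=n-\Delta(G)$. (Moreover $\gamma(\overline{G})=2$ and $u,z$ have no common non-neighbour, so $\mathrm{diam}(\overline{G})\ge 3$: this graph even contradicts the stated equality criterion for the case $\gamma(\overline{G})=2$, since $\rho(G)+\rho(\overline{G})=4<n-\Delta(G)+2=5$.) You should know that the paper's own proof commits exactly the error you were worried about: it asserts without justification that $B'=\{v\}\cup(V(G)\setminus N[v])$ is a packing ``by $(i)$, $(ii)$ and $(iii)$'', which the example above refutes. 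The repair is to strengthen $\Gamma$: extremality forces, in addition to $(i)$--$(iii)$, that no vertex of $N[v]$ has a neighbour in $V(G)\setminus N[u]$ (equivalently $N[v]\cap N[x]=\emptyset$ for every $x\notin N[u]$); with that extra condition both directions of the characterization go through, and the three equality cases then follow along the lines you sketched.
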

\begin{proof}
Let $B$ be a maximum packing in $G$ and $u$ be a vertex in $V(G)$ of maximum degree $‎\Delta(G)‎$. Since $B$
is a packing in $G$, at most one vertex in $N[u]$ is in $B$. So,
\begin{equation}\label{EQ12}
\rho(G)=|B|‎\leq n-‎\Delta(G)‎‎‎.
\end{equation}
In what follows, we prove that $‎\rho(G)=n-‎‎\Delta(G)‎$ if and only if $G\in ‎\Gamma‎$. Assume first that we have the equality. Then $|N[u]‎\cap B‎|=1$, otherwise $‎\rho(G‎‎)‎\leq n-‎\Delta‎‎‎-1$ and this is a contradiction. Moreover, $V(G)‎\setminus N[u]‎\subseteq B‎‎$. Let $v\in B‎\cap N[u]‎$. By the definition of the packing set we have $N[v]‎\cap(V(G)‎\setminus N[u])‎=‎\phi‎$. This implies $(i)$. By definition of $B$ and since $V(G)‎\setminus N[u]‎\subseteq B‎‎$, we derive at $(ii)$ and $(iii)$.\\
Now let $G\in ‎\Gamma‎$. Then $B'=\{v\}‎\cup(V(G)‎\setminus N[v])‎$ is a packing in $G$, by $(i)$, $(ii)$ and $(iii)$. Therefore, $‎\rho(G)‎\geq|B'|‎\geq n-‎\Delta(G)‎‎‎‎$ and hence the equality holds.\\
We now distinguish two cases depending on $\gamma(\overline G)‎$.\\
{\bf Case 1.} Let $\gamma(\overline{G})=1‎$ or $2$. Since, $‎\rho(\overline{G})‎\leq \gamma(\overline{G})$ (see \cite{gghr}) by (\ref{EQ12}) we have $‎\rho‎(G‎)+‎\rho(\overline{G})‎‎‎\leq n-‎\Delta(G)+1‎$ or $n-‎\Delta(G)+2‎$ and $‎\rho‎(G‎)‎\rho(\overline{G})‎‎‎\leq n-‎\Delta(G)‎$ or $2n-‎2\Delta‎(G)$, respectively.\vspace{0.1mm}\\
{\bf Case 2.} Let $\gamma(\overline{G})‎\geq3‎‎$. Let $u$ and $v$ be two distinct vertices in $B$.
Since $‎\gamma(\overline{G})‎\geq3$, the set $\{u,v\}$ does not dominate $V(G)$ in $\overline G$.
So there exists a vertex $w$ in $V(G)‎$ with $N‎_{\overline{G}}‎(w)‎\cap \{u,v\}=‎\emptyset$.
Therefore $|N_G[w]‎\cap B‎|‎\geq|N_G[w]‎\cap\{u,v\}‎|=2‎$, a contradiction.
Hence $‎\rho(G)=1‎$. Considering the symmetry between $G$ and $\overline{G}$ and (\ref{EQ12}) we have
$\rho‎(G‎)+‎\rho(\overline{G})‎‎‎\leq n-‎\Delta(\overline{G})‎+1
=‎\delta(G)+2‎$ and
$\rho‎(G‎)‎\rho(\overline{G})‎‎‎
\leq n-‎\Delta(\overline{G})‎=‎\delta(G)+1‎$.\\
The second part of the theorem follows by considering all graphs attaining the upper bound (\ref{EQ12}) and the fact that
$‎\rho(G)=1‎$ if and only if diam$(G)‎\leq2‎$, for each graph $G$. This completes the proof.
\end{proof}
We now turn our attention to the analogous problem for the parameter $‎\rho‎_{o}(G)$. We define $‎\Pi‎$ to be the family of all graphs $G$ for which $‎\Delta(G)=|V(G)|-1‎$ and $‎\delta(G)=1‎$. We make use of this class of graphs when we characterize the extremal graphs corresponding to the upper bounds in the next theorem.
\begin{theorem}
Let $G$ be a graph of order $n$. If $‎\gamma(\overline{G})‎‎\geq3‎$, then
$$‎\rho‎_{o}(G)+‎\rho‎_{o}(\overline{G})‎\leq ‎\delta(G)+3‎‎‎‎‎ \ \ \ \&\ \ \ \rho‎_{o}(G)‎\rho‎_{o}(\overline{G})‎\leq ‎\delta(G)+2.$$
Furthermore, the upper bounds hold with equality if and only if $\overline{G}$ is isomorphic to $H+rK‎_{2}‎+sK‎_{1}‎$ for some non-negative integers $r$ and $s$, where $H\in ‎\Pi‎$.
\end{theorem}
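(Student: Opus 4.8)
The plan is to collapse both inequalities onto a single estimate for $\rho_o(\overline{G})$ and then extract the extremal graphs from the equality discussion. First I would observe that the hypothesis $\gamma(\overline{G})\ge 3$ already pins down $\rho_o(G)$ exactly. Indeed, if some open packing $B$ of $G$ contained two distinct vertices $u,v$, then, exactly as in Case~2 of the preceding packing theorem, $\{u,v\}$ would fail to dominate $\overline{G}$, so some $w\notin\{u,v\}$ is non-adjacent to both $u$ and $v$ in $\overline{G}$ and hence adjacent to both in $G$; but then $u,v\in N(w)$ contradicts $|N(w)\cap B|\le 1$. Thus $\rho_o(G)\le 1$, and since a single vertex is always an open packing, $\rho_o(G)=1$. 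Consequently $\rho_o(G)+\rho_o(\overline{G})=1+\rho_o(\overline{G})$ and $\rho_o(G)\rho_o(\overline{G})=\rho_o(\overline{G})$, so both assertions reduce to the single inequality $\rho_o(\overline{G})\le\delta(G)+2$, and both become equalities precisely when $\rho_o(\overline{G})=\delta(G)+2$.

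Next I would establish a general upper bound: for every graph $F$, choosing a vertex $x$ of maximum degree and any open packing $B$, the condition $|N(x)\cap B|\le 1$ forces $B$ to avoid all but at most one vertex of $N(x)$, whence $\rho_o(F)\le |V(F)|-\Delta(F)+1$. Applying this with $F=\overline{G}$ and using $\Delta(\overline{G})=n-1-\delta(G)$ gives $\rho_o(\overline{G})\le n-\Delta(\overline{G})+1=\delta(G)+2$, which together with the first paragraph proves both displayed inequalities.

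For the equality I would argue that $\rho_o(\overline{G})=\delta(G)+2$ forces a sharply structured maximum open packing. Fix a vertex $u$ of maximum degree in $\overline{G}$ (note an edge must exist, else $\rho_o(\overline{G})=n<n+1-\Delta(\overline{G})$). Sharpness of the bound of the second paragraph forces a maximum open packing $B$ to consist of all of $R:=V\setminus N_{\overline{G}}[u]$ together with $u$ and exactly one neighbour $w\in N_{\overline{G}}(u)$. The heart of the proof is then to unwind the open-packing inequalities on the members of $B$. Since $u\in B$ lies in $N_{\overline{G}}(y)$ for every $y\in N_{\overline{G}}(u)$, each such $y$ must satisfy $N_{\overline{G}}(y)\cap B=\{u\}$; this simultaneously shows that $w$ has degree $1$ in $\overline{G}$, that there are no edges between $R$ and $N_{\overline{G}}[u]$, and (applying the inequality to the vertices of $R$, which now see only $R$) that $\overline{G}[R]$ has maximum degree at most $1$. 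Hence $R$ decomposes into $r$ copies of $K_2$ and $s$ copies of $K_1$ as separate components, while $H:=\overline{G}[N_{\overline{G}}[u]]$ has $u$ universal, so $\Delta(H)=|V(H)|-1$, and $\delta(H)=1$ as witnessed by $w$; that is, $H\in\Pi$ and $\overline{G}\cong H+rK_2+sK_1$.

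Finally, for the converse I would invoke additivity of the open packing number over disjoint unions, $\rho_o(H+rK_2+sK_1)=\rho_o(H)+2r+s$; for $H\in\Pi$ the bound of the second paragraph forces $\rho_o(H)=2$, attained by a universal vertex together with a vertex of degree $1$ (whose unique neighbour is that universal vertex). Hence $\rho_o(\overline{G})=2+2r+s=n-\Delta(\overline{G})+1=\delta(G)+2$, the required equality; within the hypothesis $\gamma(\overline{G})\ge 3$ these are exactly the decompositions with $r+s\ge 2$, since $\gamma(H+rK_2+sK_1)=1+r+s$. I expect the only genuine obstacle to be the bookkeeping in the third paragraph: squeezing, out of the single numerical identity $|B|=n-\Delta(\overline{G})+1$, the full component structure of $\overline{G}$ together with the membership $H\in\Pi$.
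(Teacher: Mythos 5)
Your proposal is correct and takes essentially the same route as the paper: the reduction $\rho_{o}(G)=1$ from $\gamma(\overline{G})\geq 3$ (via the argument of the preceding packing theorem), the general bound $\rho_{o}(F)\leq |V(F)|-\Delta(F)+1$, and the characterization of the graphs attaining that bound as $H+rK_{2}+sK_{1}$ with $H\in \Pi$, proved by analyzing a maximum open packing around a maximum-degree vertex and, conversely, by exhibiting an open packing of the right size. If anything, your equality analysis spells out details the paper leaves implicit (the absence of edges between $N_{\overline{G}}[u]$ and $R$, and the edgeless corner case), but the underlying argument is identical.
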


\begin{proof}
Let $B$ be an open packing in $G$ of maximum size and $u$ be a vertex of maximum degree $‎\Delta(G)‎$. Then at most two vertices in $N[u]$ belong two $B$ and one of them must be $u$, necessarily. Thus,
\begin{equation}\label{EQ13}
\rho‎_{o}‎(G)=|B|‎\leq n-‎\Delta(G)‎‎‎+1.
\end{equation}
We now show that the equality in (\ref{EQ13}) holds if and only if $G=H+rK‎_{2}‎+sK‎_{1}‎$ for some non-negative integers $r$ and $s$, where $H\in ‎\Pi‎$. Let the equality holds for the graph $G$. If $u$ is a vertex of the maximum degree $‎\Delta(G)‎$, then there exists two vertices in $N[u]‎\cap B‎$, otherwise $‎\rho‎_{o}(G)‎\leq n-‎\Delta(G)$ and this is a contradiction. On the other hand, by the definition of the open packing one of these two vertices is $u$ and the other must be a pendant vertex adjacent to $u$, necessarily. Moreover, $V(G)‎\setminus N[u]‎‎\subseteq B‎$. This shows that $V(G)‎\setminus N[u]$ is $2$-independent and therefore it is isomorphic to disjoint unions of $K‎_{2}‎$ and $K‎_{1}‎$. Conversely, let $G$ be such a graph. Then every maximum packing in $G$ contains all vertices of the subgraph $rK‎_{2}‎+sK‎_{1}$ of $G$, the vertex of the maximum size and a pendant vertex of $H$. So, $\rho‎_{o}‎(G)=|B|‎=n-‎\Delta(G)‎‎‎+1$.\\
Since $‎\gamma(\overline{G}‎)‎\geq3‎$, a reason similar to one given in the proof of Theorem 3.1 shows that $‎\rho‎_{o}(G)=1‎‎$. Applying the inequality (\ref{EQ13}) to $\overline{G}$, we have $‎\rho‎_{o}(G)+‎\rho‎_{o}(\overline{G})‎\leq ‎n-\Delta(\overline{G})+2=‎\delta(G)‎‎+3$. Moreover, the equality holds if and only if $\rho‎_{o}‎(\overline{G})=n-‎\Delta(\overline{G})‎‎‎+1$. This completes the proof.
\end{proof}
We note that the condition $‎\gamma(\overline{G})‎‎\geq3‎$ in Theorem 3.2 implies that at least 
one of the integers $r$ and $s$ is positive.\\
In the next two theorems we establish upper bounds on the sum and product of the open packing number of a graph and 
its complement with no additional conditions.
\begin{theorem}
If $G$ is a graph of order $n‎\geq3‎$ with maximum degree $‎\Delta‎$ and minimum degree $‎\delta‎$, then
$$‎\rho‎_{o}(G)+‎\rho‎_{o}(\overline{G})‎\leq n-‎\Delta+‎\delta‎‎+3‎‎‎‎‎ \ \ \ \&\ \ \ \rho‎_{o}(G)‎\rho‎_{o}(\overline{G})‎\leq(n-‎\Delta+1‎)(‎\delta+2‎).$$
The bounds hold with equality if and only if $\{G,\overline{G}\}=\{H,H'+K‎_{1}‎\}$, where $H,H'\in ‎\Pi‎$.
\end{theorem}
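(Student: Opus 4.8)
The plan is to derive both inequalities directly from the bound (\ref{EQ13}) of the preceding theorem, applied simultaneously to $G$ and to $\overline{G}$. Since $\Delta(\overline{G})=n-1-\delta$, inequality (\ref{EQ13}) for $\overline{G}$ reads $\rho_{o}(\overline{G})\le n-\Delta(\overline{G})+1=\delta+2$, while for $G$ it gives $\rho_{o}(G)\le n-\Delta+1$. Adding these yields $\rho_{o}(G)+\rho_{o}(\overline{G})\le n-\Delta+\delta+3$, and multiplying them yields $\rho_{o}(G)\rho_{o}(\overline{G})\le (n-\Delta+1)(\delta+2)$, which are the two asserted bounds.

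For the equality discussion I would first reduce both extremal statements to a single condition. Because $\rho_{o}(H)\ge 1$ for every graph $H$ and $n-\Delta+1\ge 1$, all four quantities $\rho_{o}(G),\rho_{o}(\overline{G}),n-\Delta+1,\delta+2$ are positive; hence equality in the product bound forces equality in each factor, exactly as equality in the sum bound does. Thus, in either case, equality holds if and only if $\rho_{o}(G)=n-\Delta+1$ and $\rho_{o}(\overline{G})=n-\Delta(\overline{G})+1$ hold simultaneously. I would then invoke the equality characterization established in the proof of the preceding theorem: it says $\rho_{o}(G)=n-\Delta+1$ precisely when $G=H_{1}+rK_{2}+sK_{1}$ with $H_{1}\in\Pi$, and likewise $\overline{G}=H_{2}+r'K_{2}+s'K_{1}$ with $H_{2}\in\Pi$, for some non-negative integers $r,s,r',s'$.

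The heart of the argument, and the step I expect to be the main obstacle, is to show that these two simultaneous structural descriptions force $\{G,\overline{G}\}=\{H,H'+K_{1}\}$ with $H,H'\in\Pi$. I would run this through a connectivity dichotomy. A graph of the form (member of $\Pi$)$+rK_{2}+sK_{1}$ is connected if and only if $r=s=0$, since a member of $\Pi$ is connected (it has a universal vertex) while every extra $K_{2}$ or $K_{1}$ is a separate component. Both $G$ and $\overline{G}$ cannot be connected at once: if $G\in\Pi$ then $G$ has a universal vertex, so $\overline{G}$ has an isolated vertex and is disconnected for $n\ge 2$. Since at least one of $G,\overline{G}$ is always connected, exactly one of them is connected, say $G=H_{1}\in\Pi$ after relabelling the unordered pair. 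Writing $a$ for a universal vertex of $G$ and $b$ for a vertex of degree $1$ (whose unique neighbour is then $a$), in $\overline{G}$ the vertex $a$ is isolated while $b$ is adjacent to all $n-2$ remaining vertices; consequently $\overline{G}$ has exactly two components, namely the isolated vertex $a$ and the connected graph $C=\overline{G}-a$ on $n-1$ vertices, in which $b$ is universal. Matching this two-component graph against $\overline{G}=H_{2}+r'K_{2}+s'K_{1}$, whose components are $H_{2}$ together with the $r'$ copies of $K_{2}$ and $s'$ copies of $K_{1}$, and using that $H_{2}\in\Pi$ has at least two vertices, forces $r'+s'=1$ and then $s'=1$, $r'=0$, $H_{2}=C$; that is, $\overline{G}=H_{2}+K_{1}$ with $H_{2}\in\Pi$. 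This gives $\{G,\overline{G}\}=\{H_{1},H_{2}+K_{1}\}$ of the required shape. Conversely, if $\{G,\overline{G}\}=\{H,H'+K_{1}\}$ with $H,H'\in\Pi$, then each of $G$ and $\overline{G}$ has the form covered by the equality characterization of (\ref{EQ13}) (take $r=s=0$ for the member of $\Pi$, and $r'=0$, $s'=1$ for the other), so both individual bounds, and therefore both displayed bounds, hold with equality, completing the proof.
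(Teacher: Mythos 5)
Your proof is correct, and its first half coincides with the paper's: both derive the two bounds by applying (\ref{EQ13}) to $G$ and $\overline{G}$ together with $\Delta(\overline{G})=n-1-\delta$, and both reduce equality to the simultaneous equalities $\rho_{o}(G)=n-\Delta+1$ and $\rho_{o}(\overline{G})=\delta+2$, which by the proof of Theorem 3.2 forces $G=H_{1}+rK_{2}+sK_{1}$ and $\overline{G}=H_{2}+r'K_{2}+s'K_{1}$ with $H_{1},H_{2}\in\Pi$. (Your explicit positivity argument for why equality in the \emph{product} forces equality in each factor is in fact spelled out more carefully than in the paper, which asserts this step silently.) Where you genuinely diverge is the endgame that converts these two decompositions into $\{G,\overline{G}\}=\{H,H'+K_{1}\}$. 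The paper eliminates the parameters one at a time: if $r>0$, the two vertices of a $K_{2}$ component of $G$ are adjacent to all other vertices in $\overline{G}$, so $\overline{G}$ is connected with minimum degree at least $2$, contradicting its assumed form; by symmetry $r'=0$; it then shows $s,s'\le 1$ and rules out $(s,s')=(0,0)$ and $(1,1)$. You instead invoke a connectivity dichotomy: at least one of $G,\overline{G}$ is connected, and both cannot be (a member of $\Pi$ has a universal vertex, forcing an isolated vertex in the complement), so exactly one is, say $G=H_{1}\in\Pi$; you then compute the complement's component structure explicitly --- an isolated vertex plus a graph with a universal vertex --- and match it against $H_{2}+r'K_{2}+s'K_{1}$. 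Your route buys a shorter, cleaner case analysis, replacing four separate eliminations and the symmetry bookkeeping with one structural observation; the paper's route is more self-contained, using only local degree arguments and never the standard fact that $G$ or $\overline{G}$ must be connected. Both arguments are sound and rest on the same two pillars, namely (\ref{EQ13}) and its equality characterization.
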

\begin{proof}
Using (\ref{EQ13}) and the symmetry between $G$ and $\overline{G}$ we have
\begin{equation}\label{EQ14}
\begin{array}{lcl}
‎\rho‎_{o}(G)+‎\rho‎_{o}(\overline{G})‎\leq n-‎\Delta(G)+1+n-‎\Delta(\overline{G})‎+1=n-‎\Delta+‎\delta+3 \\
\ \ \ \ \ \ \ \ \ \ \ \ \ \ \ \ \ \ \ \ \ \ \ \ \ \ \ \ \ \ \ \ \ \ \ \ \ \ \ \ \ \ \mbox{and}\\
\rho‎_{o}(G)‎\rho‎_{o}(\overline{G})‎\leq(n-‎\Delta(G)+1‎)(n-‎\Delta(\overline{G})+1)=(n-‎\Delta+1‎)(‎\delta+2‎).
\end{array}
\end{equation}
On the other hand, by the proof of Theorem 3.2 the bounds given in (\ref{EQ14}) hold with equality if and only if
$G=H+rK‎_{2}‎+sK‎_{1}$‎ and $\overline G=H'+r'K‎_{2}‎+s'K‎_{1}$ for some
non-negative integers $r,s,r',s'$, where $H,H'\in ‎\Pi‎$. We assume that the upper bounds (\ref{EQ14}) hold with
equality. Assume first that $r>0$ and consider a copy of $K‎_{2}‎$ on two vertices $u$ and $v$, as a
component of $G-H‎$. Then $uv\notin E(\overline{G})$ and $u$ and $v$ are adjacent to all other 
$n-2‎\geq2‎$ vertices of $\overline{G}$. This shows that $\overline G$ is connected and 
$\delta(\overline G)\geq2$, a contradiction.
Therefore $r=0$. Moreover, by the symmetry we have $r'=0$. Now let $u$ be an isolated vertex as a component of $G-H$. Then, $|N‎_{\overline{G}}(u)|=n-1‎$ and hence $s'=0$ and $\overline{G}$ has at least one vertex of degree one. But if $s‎\geq2‎$, then there is no vertex of $\overline{G}$ of degree one and this is a contradiction. So, $s‎\leq1‎$. Also, $s'‎\leq1‎$ by the symmetry. On the other hand the cases $s=s'=1$ and $s=s'=0$ are impossible, by the constructions of $G$ and $\overline{G}$. Thus, $(s,s')\in\{(1,0),(0,1)\}$. This implies that $\{G,\overline{G}\}=\{H,H'+K‎_{1}‎\}$, where $H,H'\in ‎\Pi‎$.\\
Now let $\{G,\overline{G}\}=\{H,H'+K‎_{1}‎\}$, where $H,H'\in ‎\Pi‎$. Then the bounds given in (\ref{EQ14}) hold with equality, by the proof of Theorem 3.2.
\end{proof}

The first Nordhaus-Gaddum type inequality for the sum of the total domination numbers of a graph and its complement was given in \cite{cdh}. Henning et al. \cite{hjs} extend this result to include the product.
\begin{theorem}(\cite{cdh,hjs})
If $G$ is a graph of order $n$ such that neither $G$ nor $\overline{G}$ contains isolated vertices, then $‎\gamma‎_{t}(G)+\gamma‎_{t}(\overline{G})‎\leq n+2‎‎‎$ and $‎\gamma‎_{t}(G)\gamma‎_{t}(\overline{G})‎\leq2n‎‎‎$. Furthermore, the equality holds if and only if $G$ or $\overline{G}$ consists of disjoint copies of $K‎_{2}‎$.
\end{theorem}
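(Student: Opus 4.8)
The plan is to reduce everything to a dichotomy governed by $\min\{\gamma_{t}(G),\gamma_{t}(\overline{G})\}$, resting on two elementary facts. First, for any graph $H$ of order $n$ with no isolated vertex, $2\le\gamma_{t}(H)\le n$, and the upper bound is attained exactly when $H=mK_{2}$: if $\gamma_{t}(H)=n$ then $V(H)\setminus\{v\}$ fails to be a total dominating set for every $v$, i.e.\ each $v$ is the unique neighbour of some vertex, and chasing this forced pendant relation makes every component a $K_{2}$. Second, I would prove the diameter lemma: if $\mathrm{diam}(G)\ge 3$ (in particular if $G$ is disconnected) then $\gamma_{t}(\overline{G})=2$; indeed, choosing $u,v$ with $d_{G}(u,v)\ge 3$ gives $uv\in E(\overline{G})$ while no vertex is $G$-adjacent to both, so every other vertex is $\overline{G}$-adjacent to $u$ or to $v$, making $\{u,v\}$ a total dominating set of $\overline{G}$. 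Taking contrapositives, $\gamma_{t}(G),\gamma_{t}(\overline{G})\ge 3$ forces both $G$ and $\overline{G}$ to have diameter at most $2$. With these in hand the easy case is immediate: if, say, $\gamma_{t}(\overline{G})=2$, then $\gamma_{t}(G)+\gamma_{t}(\overline{G})\le n+2$ and $\gamma_{t}(G)\gamma_{t}(\overline{G})\le 2n$ follow from $\gamma_{t}(G)\le n$, with equality in either bound precisely when $\gamma_{t}(G)=n$, i.e.\ $G=mK_{2}$ (and symmetrically for $\gamma_{t}(G)=2$).

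It remains to treat $\gamma_{t}(G),\gamma_{t}(\overline{G})\ge 3$, where both graphs have diameter at most $2$. Here I would use the observation that in a graph of diameter at most $2$ the closed neighbourhood $N[w]$ of \emph{every} vertex $w$ is already a total dominating set, since every vertex lies within distance $2$ of $w$ and hence has a neighbour in $N[w]$. Applying this to a \emph{single} vertex $u$ of maximum degree in $G$ shows simultaneously that $N_{G}[u]$ totally dominates $G$ and $N_{\overline{G}}[u]$ totally dominates $\overline{G}$, so
\[\gamma_{t}(G)+\gamma_{t}(\overline{G})\le |N_{G}[u]|+|N_{\overline{G}}[u]|=(\Delta(G)+1)+(n-\Delta(G))=n+1.\]
Thus in this case the sum is strictly below $n+2$, so no extremal graph arises here, and the sum bound together with its equality characterisation is complete.

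The product bound in this same case $\gamma_{t}(G),\gamma_{t}(\overline{G})\ge 3$ is where the genuine difficulty lies, and it is the step I expect to be the main obstacle. The complementary pair above controls the sum but not the product: writing $a=\gamma_{t}(G)$, $b=\gamma_{t}(\overline{G})$, we only obtain $a+b\le n+1$ with $a,b\ge 3$, which still permits $ab$ as large as $3(n-2)$, well above $2n$. Even the sharper diameter-$2$ estimates $\gamma_{t}(G)\le\delta(G)+1$ and $\gamma_{t}(\overline{G})\le\delta(\overline{G})+1$ (apply the closed-neighbourhood fact to a minimum-degree vertex) are not enough, since $\delta(G)+\delta(\overline{G})\le n-1$ allows the product of the two estimates to reach order $n^{2}/4$. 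The point one must exploit is that these degree estimates are wildly loose precisely when the graphs are dense: a connected diameter-$2$ graph of large minimum degree has total domination number far below $\delta+1$. I would therefore close this case either by feeding a logarithmic greedy upper bound of the form $\gamma_{t}(H)=O\big(n\log\delta(H)/\delta(H)\big)$ into both $G$ and $\overline{G}$ and optimising against $\delta(G)+\delta(\overline{G})\le n-1$, or—following \cite{hjs}—by a direct extremal analysis ruling out $ab=2n$ under $a,b\ge 3$. Either route yields $ab<2n$ here, so the only graphs attaining $\gamma_{t}(G)\gamma_{t}(\overline{G})=2n$ are those with $\gamma_{t}=2$ on one side, that is, $G$ or $\overline{G}$ isomorphic to $mK_{2}$, matching the claimed characterisation for both bounds.
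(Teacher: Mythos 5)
The paper itself offers no proof to compare against here: this statement is its Theorem 3.4, imported with citations to \cite{cdh} and \cite{hjs} purely as a tool for Theorem 3.5. So your proposal must stand on its own, and its first half does. Your Fact 1 (for isolate-free $H$, $\gamma_{t}(H)=n$ if and only if $H=mK_{2}$), the diameter lemma, the resulting dichotomy, and---in the case $\gamma_{t}(G),\gamma_{t}(\overline{G})\ge 3$, where both diameters are at most $2$---the one-vertex estimate $\gamma_{t}(G)+\gamma_{t}(\overline{G})\le |N_{G}[u]|+|N_{\overline{G}}[u]|=n+1$ are all correct. This completely settles the sum inequality with its equality characterization, by a clean and self-contained argument.

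The genuine gap is the product bound when $\gamma_{t}(G),\gamma_{t}(\overline{G})\ge 3$, exactly where you flagged it, and neither of your proposed routes closes it. The second route is simply a citation of \cite{hjs}, i.e.\ of the result being proved (and note one needs $\gamma_{t}(G)\gamma_{t}(\overline{G})<2n$ there, not merely the exclusion of equality). The first route fails structurally, not only because an $O(\cdot)$ estimate can never establish an exact inequality such as $\le 2n$ for all $n$: the two bounds you propose to combine, $\gamma_{t}(H)\le\delta(H)+1$ (diameter $2$) and $\gamma_{t}(H)=O\bigl(n\log\delta(H)/\delta(H)\bigr)$ (greedy), have their minimum maximized when $\delta(H)\approx\sqrt{n\log n}$, where both are of order $\sqrt{n\log n}$; and since $\delta(G)+\delta(\overline{G})\le n-1$ is perfectly compatible with both minimum degrees being near $\sqrt{n\log n}$, optimizing these bounds can only give $\gamma_{t}(G)\gamma_{t}(\overline{G})=O(n\log n)$, which exceeds $2n$ for large $n$. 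This is not repairable within that framework: diameter-$2$ graphs genuinely can have $\gamma_{t}$ of order $\sqrt{n}$ (e.g.\ the triangular graph $T(m)$) and even of order $\sqrt{n\ln n}$ (Desormeaux--Haynes--Henning), so any bound that sees each of $G$, $\overline{G}$ only through its own diameter and degrees is too weak; the complementarity must be exploited directly. For instance, $\gamma_{t}(\overline{G})=b\ge 3$ forces $\gamma(\overline{G})\ge\lceil b/2\rceil$, hence every set of $\lceil b/2\rceil-1$ vertices has a common $G$-neighbour, and partitioning $V$ into such groups produces a total dominating set of $G$ of size roughly $2n/(b-2)$---but even this only gives $\gamma_{t}(G)\gamma_{t}(\overline{G})\lesssim 2n\,b/(b-2)$, still above $2n$. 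Obtaining the exact constant $2$, with strict inequality in this case, is the real content of \cite{hjs}, and it is absent from the proposal.
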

We now give a Nordhaus-Gaddum bound for the sum and product of the open packing numbers of a graph and
its complement just in terms of its order.
\begin{theorem}
Let $G$ be a graph of order $n$. Then
$‎\rho‎_{o}(G)+‎‎‎\rho‎_{o}(\overline{G})‎\leq n+2‎$ and
$‎\rho‎_{o}(G)\rho‎_{o}(\overline{G})‎\leq2n‎$. Furthermore, these bounds hold with equality if
and only if $\{G,\overline G\}=\{2K_{2},C_{4}\}$ or $\{G,\overline{G}\}=\{K_{2},2K_{1}\}$ or
$\{G,\overline{G}\}=\{P‎_{3},\overline{P‎_{3}‎}\}$.
\end{theorem}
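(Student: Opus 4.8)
The plan is to reduce to two regimes according to whether $G$ or $\overline{G}$ has an isolated vertex, using the total domination number as an intermediary in the non-degenerate regime. First I would record the inequality $\rho_o(H)\le\gamma_t(H)$, valid for every graph $H$ with no isolated vertex: if $B$ is a maximum open packing and $S$ a minimum total dominating set, then each $b\in B$ has a neighbour in $S$, and choosing one such neighbour defines a map $B\to S$ that is injective (two vertices of $B$ sent to the same $s$ would give $|N(s)\cap B|\ge2$, violating the open packing condition). Now, if $\delta(G),\delta(\overline{G})\ge1$, then by this inequality and Theorem 3.4 we get $\rho_o(G)+\rho_o(\overline{G})\le\gamma_t(G)+\gamma_t(\overline{G})\le n+2$ and $\rho_o(G)\rho_o(\overline{G})\le\gamma_t(G)\gamma_t(\overline{G})\le2n$. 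If instead $\delta(G)=0$ (the case $\delta(\overline{G})=0$ being symmetric), then $\overline{G}$ has a universal vertex, so $\Delta(\overline{G})=n-1$ and (\ref{EQ13}) applied to $\overline{G}$ gives $\rho_o(\overline{G})\le2$; together with the trivial bound $\rho_o(G)\le n$ this yields both $\rho_o(G)+\rho_o(\overline{G})\le n+2$ and $\rho_o(G)\rho_o(\overline{G})\le2n$. Since for $n\ge2$ the two complements cannot both have an isolated vertex, these regimes exhaust all cases and prove the inequalities.

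For the equality analysis in the non-degenerate regime, equality in either bound forces equality throughout, hence $\rho_o(G)=\gamma_t(G)$, $\rho_o(\overline{G})=\gamma_t(\overline{G})$, and $\gamma_t(G)+\gamma_t(\overline{G})=n+2$ (respectively the product equals $2n$). The characterization in Theorem 3.4 then shows that $G$ or $\overline{G}$ is a disjoint union $mK_2$, so $\{G,\overline{G}\}=\{mK_2,\overline{mK_2}\}$ with $n=2m$; the value $m=1$ is excluded because $\overline{K_2}=2K_1$ has an isolated vertex, so $m\ge2$. Since $\rho_o(mK_2)=\gamma_t(mK_2)=2m$, equality additionally requires $\rho_o(\overline{mK_2})=\gamma_t(\overline{mK_2})=2$, where $\overline{mK_2}$ is the complete multipartite graph with $m$ parts of size two. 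Here I would verify that $\rho_o(\overline{mK_2})=2$ holds only for $m=2$ (for $m\ge3$ any two vertices have a common neighbour lying in a third part, forcing $\rho_o=1$), which isolates exactly the pair $\{2K_2,C_4\}$ since $\overline{2K_2}=C_4$.

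In the degenerate regime, say $\delta(G)=0$, equality in either bound forces $\rho_o(G)=n$ and $\rho_o(\overline{G})=2$. The first condition holds if and only if every vertex of $G$ has degree at most one, i.e. $G=rK_2+sK_1$ with $s\ge1$, so that $\overline{G}$ is $K_n$ with a matching of size $r$ removed and has $s$ universal vertices. The second condition, $\rho_o(\overline{G})=2$, is equivalent to the existence of two vertices of $\overline{G}$ with no common neighbour. Because each universal vertex is a common neighbour of every pair avoiding it, such a pair must consist of a universal vertex $c$ and a vertex whose only neighbour is $c$; inspecting the possible degrees of $\overline{G}$ (namely $n-1$ or $n-2$) I would show this forces $n\le3$, and that among those only the configurations carrying a degree-one vertex survive, giving precisely $\{2K_1,K_2\}$ for $n=2$ and $\{\overline{P_3},P_3\}$ for $n=3$. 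Assembling the pairs found in the two regimes yields the claimed list.

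I expect the main obstacle to be the equality analysis rather than the inequalities. Concretely, the two delicate points are ruling out the infinite family $mK_2$ for $m\ge3$ in the non-degenerate regime (which requires actually computing the open packing number of the complementary complete multipartite graph, not merely its total domination number) and the finite but careful bookkeeping of the complete-minus-matching graphs in the degenerate regime; once $\rho_o\le\gamma_t$ and (\ref{EQ13}) are available, the bounds themselves follow immediately.
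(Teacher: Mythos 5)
Your proposal is correct and follows essentially the same route as the paper: the same split into the regime where neither graph has an isolated vertex (handled via $\rho_o\le\gamma_t$ together with Theorem 3.4 and its equality characterization reducing to $G=mK_2$) and the degenerate regime with an isolated vertex (handled via the universal-vertex bound $\rho_o(\overline{G})\le 2$ and $\rho_o(G)\le n$). The only differences are cosmetic: you prove $\rho_o\le\gamma_t$ rather than citing it, and you eliminate $mK_2$ for $m\ge3$ and settle the degenerate equality case by direct common-neighbour computations in $\overline{mK_2}$ and in the complete-graph-minus-matching, where the paper instead reuses the ``$\gamma\ge3$ forces $\rho_o$ of the complement to be $1$'' argument from Theorem 3.1 and a component count.
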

\begin{proof}
We consider two cases.\\
{\bf Case 1.} Let $G$ and $\overline{G}$ have no isolated vertices. Since
$\rho‎_{o}(H)‎\leq \gamma‎_{t}(H)‎$ for every graph $H$ with no isolated vertices (see \cite{r}),
the upper bounds follow by Theorem 3.4. Obviously, the equality holds for 
$\{G,\overline{G}\}=\{2K‎_{2},C_{4}‎‎\}$. Now let the upper bounds hold with the equality for the graph 
$G$. Since $\rho‎_{o}(G)‎\leq \gamma‎_{t}(G)‎$, we deduce from Theorem 3.4 that
$‎\gamma‎_{t}(G)+\gamma‎_{t}(\overline{G})‎=n+2‎‎‎$. Without loss of generality, 
we may assume that $G=‎\frac{n}{2}K‎_{2}‎‎$ ($n$ is necessarily even), by Theorem 3.4. This implies 
that $‎\rho‎_{o}(G)=n$ and $‎‎‎\rho‎_{o}(\overline{G})‎=2$. If $n‎\geq6‎$, 
then $‎\gamma(G)‎\geq3‎‎$ and similar to the proof of Theorem 3.1, we have 
$‎\rho‎_{o}(\overline{G})=1‎‎$, a contradiction. So, $n=2$ or $n=4$. Since neither $G$ nor 
$\overline{G}$ contains isolated vertices, we have $n=4$. So $G=2K_{2}$ and $\overline G=C_{4}$.\\
{\bf Case 2.} We now consider the case in which $G$ (or $\overline{G}$) has an isolated vertex $v$. Since
$N‎_{\overline{G}}[v]=V(\overline{G})‎$, we have $\rho‎_{o}(\overline{G})‎‎\leq2‎‎$.
This implies the upper bounds. Clearly, the equality holds for $\{G,\overline G\}=\{K_2,2K_1\}$ or
$\{G,\overline{G}\}=\{P‎_{3},\overline{P‎_{3}‎}\}$.  Now let the upper bounds hold with equality.
If $G$ has at least two isolated vertices, then $n=2$ or $‎\gamma(G)‎\geq3‎‎$. If
$n=2$, then $\{G,\overline{G}\}=\{K_{2},2K_{1}\}$. If $n\ge 3$, then $\gamma(G)\ge 3$ and therefore
$‎\rho‎_{o}(\overline{G})=1‎‎$, implying
$‎\rho‎_{o}(G)+‎‎‎\rho‎_{o}(\overline{G})‎‎‎\leq‎‎ n+1‎$ and
$‎\rho‎_{o}(G)‎‎‎\rho‎_{o}(\overline{G})‎‎‎\leq n‎$. These are
contradictions. So we may assume that $G$ has just one isolated vertex.
Taking into account the fact that the upper bounds hold with equality then
$\rho‎_{o}(\overline{G})‎‎=2‎‎$ and $‎\rho‎_{o}(G)=n$. Thus
$‎\Delta(G)‎\leq1‎‎$. On the other hand $G$ has exactly two components, for otherwise 
$\gamma(G)\geq3$ or $G=K_{1}$, contradicting the fact that $\rho_{o}(G)>1$. Thus $G=\bar P‎_{3}‎$.\\
The results now follow by considering Case $1$ and Case $2$.
\end{proof}


\end{document}